\newtheorem{theorem}{Theorem}[section]
\newtheorem{proposition}[theorem]{Proposition}
\newtheorem{corollary}[theorem]{Corollary}
\newtheorem{definition}[theorem]{Definition}
\newtheorem{remark}[theorem]{Remark}
\newcommand{\C}{\mathbb{C}}
\newcommand{\Z}{\mathcal{Z}}
\newcommand{\holo}{\mathcal{O}}
\newcommand{\dbar}{\bar{\partial}}
\newcommand{\Hom}{\operatorname{Hom}}
\newcommand{\J}{\mathcal{J}}
\newcommand{\I}{\mathcal{I}}
\title{An effective uniform Artin-Rees lemma}
\author{Johannes Lundqvist}
\date{}
\address{\newline Department of Mathematics\newline Stockholm University\newline SE-106 91 Stockholm\newline Sweden}
\email{johannes@math.su.se}
\begin{document}

\maketitle

\begin{abstract}
We prove a global uniform Artin-Rees lemma type theorem for sections of ample line bundles over smooth projective varieties. This result is used to prove an Artin-Rees lemma for the polynomial ring with uniform degree bounds. The proof is based on multidimensional residue calculus.
\end{abstract}

\section{Introduction}\label{Intro}

Assume that $(X,x)$ is a germ of a reduced analytic variety. Let $M$ be a finitely generated module over the local ring, $\holo_{X,x}$, of germs of holomorphic functions at $x$. In \cite{Sznajdman} it was proved by residue calculus that if $N$ is a submodule of $M$, then there exists a constant $\mu$ such that the inclusion
\begin{equation}\label{AR}
I^{\mu+r}M\cap N\subset I^r N
\end{equation}
holds for all ideals $I$ of $\holo_{X,x}$ and all non-negative integers $r$.
This is the well-known uniform Artin-Rees lemma that was proved by Huneke in \cite{Huneke} for much more general rings.

The uniform Artin-Rees lemma is related to the theorem of Brian\c con-Skoda, \cite{BriSko}. Since there are global versions of the latter, see \cite{EL} and \cite{Hic} for smooth $X$ and \cite{AW2} for singular $X$,
it is reasonable to believe that there is a global version of the inclusion \eqref{AR}.
In this paper we prove such a result when $X$ is smooth.

\begin{theorem}\label{MainThm}
Assume that $X$ is a smooth projective variety of dimension $n$ and that $L$ is an ample line bundle over $X$.
Assume moreover that $f^1,\ldots,f^m$ are global holomorphic sections of $L$. Then there exist constants $\mu$ and $s_0$ such that for every set of global holomorphic sections $g^1,\ldots,g^{\ell}$ of any ample line bundle $M$ over $X$ the following is true: If $\phi$ is a global section of 
\begin{equation*}
M^{\otimes s}\otimes K_X \otimes L^{\otimes s_0}, \quad s\geq n + r,\quad r\geq 1,
\end{equation*}
such that $\phi\in \J(f)$ and $|\phi|\leq C|g|^{\mu+r-1}$ for some $C>0$, then
\begin{equation*}
\phi = \sum_{\substack{j=1,\ldots,m\\I_1 + \ldots + I_{\ell}= r }} \alpha_{I,j} (g^1)^{I_1} \ldots (g^{\ell})^{I_{\ell}} f^j,
\end{equation*} 
where $\alpha_{I,j}$ are global sections of $M^{\otimes (s-r)}\otimes K_X \otimes L^{\otimes (s_0-1)}$.
\end{theorem}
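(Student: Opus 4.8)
The plan is to split the argument into a local membership step, reducing the two hypotheses on $\phi$ to the single assertion that $\phi$ is a local section of the ideal sheaf $\I:=\J(f)\cdot\J(g)^r$, and a globalization step in which the residue calculus produces the global coefficients. Set $\mathcal L:=M^{\otimes s}\otimes K_X\otimes L^{\otimes s_0}$ and $G_{I,j}:=(g^1)^{I_1}\cdots(g^\ell)^{I_\ell}f^j\in H^0(X,L\otimes M^{\otimes r})$, so that $\I$ is generated by the $G_{I,j}$ with $|I|=r$. The estimate $|\phi|\le C|g|^{\mu+r-1}$ says, in any local trivialisation, that $\phi$ lies in the integral closure $\overline{\J(g)^{\mu+r-1}}$; since $X$ is smooth the local rings $\holo_{X,x}$ are regular of dimension $n$, so the Briançon--Skoda theorem \cite{BriSko} gives $\phi\in\J(g)^{\mu+r-n}$ locally as soon as $\mu\ge n$. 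Combining this with $\phi\in\J(f)$ and feeding it into the local uniform Artin--Rees lemma \eqref{AR} of \cite{Sznajdman}, applied to the \emph{fixed} pair $M=\holo_{X,x}$, $N=\J(f)_x$ and the \emph{varying} ideal $I=\J(g)_x$ — whose constant $\mu_0$ depends only on $f$ and, by compactness of $X$, can be chosen independent of $x$ — one gets $\phi\in\J(g)^r\J(f)=\I$ locally whenever $\mu\ge\mu_0+n$. So I would set $\mu:=\mu_0+n$, which is uniform in $g$ and $M$; then $\phi$ is a global section of the subsheaf $\I\cdot\mathcal L\subset\mathcal L$.

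It remains to lift $\phi$ through the surjection $\Phi\colon\bigoplus_{I,j}\mathcal A\to\I\cdot\mathcal L$, $(\alpha_{I,j})\mapsto\sum_{I,j}\alpha_{I,j}G_{I,j}$, where $\mathcal A:=M^{\otimes(s-r)}\otimes K_X\otimes L^{\otimes(s_0-1)}$ — the twists match because $G_{I,j}\in H^0(L\otimes M^{\otimes r})$. The cohomological obstruction to this lies in $H^1(X,\ker\Phi)$, but $\ker\Phi$ depends on $g$, so Serre vanishing alone would force $s_0$ to depend on $g$; this is exactly why one uses residue calculus, which produces the lift explicitly. Concretely, I would set up a weighted Koppelman integral formula on $X$, of the kind used for weighted integral representations on projective manifolds, with weight assembled from: (i) a Fubini--Study-type factor exploiting the ampleness of $L$, responsible for the twist $L^{\otimes s_0}$ and for supplying positivity enough to absorb the order-$n$ singularity of the Bochner--Martinelli kernel; (ii) a similar factor for the ample bundle $M$, carrying $M^{\otimes s}$; and (iii) a division factor built from the sections $G_{I,j}$, a smooth metric, and a residue current $R$ coming from a locally free resolution of $\holo_X/\I$. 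Evaluating the formula on $\phi$ and invoking the duality principle — $R\phi=0$ because $\phi\in\I$ locally — the residue/remainder term vanishes and the formula becomes $\phi(z)=\sum_{I,j}G_{I,j}(z)\,\alpha_{I,j}(z)$ with each $\alpha_{I,j}$ an explicit integral over $X$; one then checks these integrals converge and define global holomorphic sections of $\mathcal A$. The hypothesis $s\ge n+r$ enters precisely here: $r$ of the $M$-twists are spent extracting the degree-$r$ monomials $g^I$, the remaining $s-r\ge n$ being absorbed by the kernel singularity, while $K_X$ is the natural bundle making the bidegrees in the Koppelman kernel balance.

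The delicate point, and the one I expect to be the main obstacle, is the \emph{uniformity} of $s_0$ (and $\mu$) in the varying data $(g,M)$: both the current $R$ for $\I=\J(f)\cdot\J(g)^r$ and the division weight depend on $g$. I would exploit the product structure of $\I$ and take $R=R^f\wedge R^g_{(r)}$, with $R^f$ the \emph{fixed} Andersson--Wulcan residue current of $\J(f)$ and $R^g_{(r)}$ a Bochner--Martinelli-type current adapted to $\J(g)^r$ whose order is bounded by $n$ independently of $g$; the factorisation moreover confines the divisibility by the $g^I$ to the $R^g_{(r)}$-factor, so the extra positivity one must spend is controlled by $n$ alone and not by $g$. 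One must still verify that this product current annihilates $\phi$, which follows by combining the local membership of Step 1 with a Briançon--Skoda-type estimate for the action of $R^g_{(r)}$ on the small section $\phi$ — the hypothesis $|\phi|\le C|g|^{\mu+r-1}$ re-entering at this point. Carrying out all of the kernel estimates uniformly in $(g,M)$ is the bulk of the technical work.
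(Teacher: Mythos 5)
Your outline has the right flavor—residue currents, the duality principle, and the recognition that the uniformity of the constants in $(g,M)$ is the crux—but it departs from the paper's actual proof in its organization, and two of its load-bearing steps are either unjustified or set up in a way that would not work as stated.

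First, the paper never establishes local membership $\phi\in\J(f)\J(g)^r$ as an intermediate step. Instead it works directly with a residue current $R^H$ for the \emph{diamond product} complex $H_\bullet=(S\otimes M_\bullet\lozenge\cdots\lozenge M_\bullet)\lozenge L_\bullet$, where $L_\bullet$ is a fixed locally free resolution of $\J(f)$ by sums of $L^{-d_j}$ and $M_\bullet$ is the Koszul complex on $g$, and then applies Proposition~\ref{kohomologi} after verifying Kodaira vanishing for all $H_k$, $1\le k\le n+1$. The uniformity of $s_0$ comes for free from the structure of $H_\bullet$: the Koszul factors contribute $M^{-j}$ with $j\le n$, and the $L$-twists are fixed by the resolution of $\J(f)$; that is how one escapes the ``$\ker\Phi$ depends on $g$'' problem that you rightly flag as the obstacle to naive Serre vanishing. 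Your alternative of a weighted Koppelman formula could in principle play the role of Proposition~\ref{kohomologi}, but you do not carry it out, and the uniform kernel estimates you defer are exactly where a proof has to live.

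Second, and more seriously, your globalization current is the wrong object, and your claimed justification for its annihilation is not valid. You propose $R=R^f\wedge R^g_{(r)}$ (a product of two residue currents) and say at one point that $R\phi=0$ ``because $\phi\in\I$ locally.'' For a non-exact complex—and the diamond complex is in general not exact—residue annihilation is strictly stronger than ideal membership, as the paper's one-variable example $U\wedge R=0$ vs. $R\wedge U=\bar\partial(1/z^2)$ is meant to illustrate; local membership in $\I$ alone does not kill such a current. The paper's $R^H$ has a genuinely different structure, $R^H=R^M\wedge U^L-U^M\wedge R^L$: a residue against a \emph{principal value}, minus its transpose, not a product of two residues. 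The second term dies because $\phi\in\J(f)$ gives $R^L\phi=0$ by duality; the first term dies only because of the size estimate $|\phi|\le C|g|^{\mu+r-1}$ and a global bound $\tau$ coming from a resolution of singularities associated to $\J(f)$, which is precisely where the explicit $\mu\ge\min(\ell,n)+\tau+1$ emerges. You gesture at this with ``a Briançon--Skoda-type estimate for the action of $R^g_{(r)}$,'' but with your current there is no clean splitting into a term killed by $R^f\phi=0$ and a term killed by the estimate, so the argument does not close.

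Finally, your Step~1 itself, while dispensable for the globalization, rests on the unjustified claim that the local uniform Artin--Rees constant $\mu_0$ for $N=\J(f)_x\subset\holo_{X,x}$ can be taken independent of $x$ ``by compactness.'' Huneke's and Sznajdman's theorems give uniformity in the ideal $I$ at a fixed germ; uniformity in the base point is an additional statement that needs an argument (in fact it is a by-product of exactly the global resolution-of-singularities control that the paper uses to bound $\tau$). If you want to retain this step, you should either cite a genuinely global uniform Artin--Rees result or observe that the desired uniformity in $x$ is what the present paper establishes, which would make the argument circular.
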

Here and throughout this paper $|g|$ is short for $|g^1|+\ldots+|g^{\ell}|$.
\begin{remark}\label{rmk}{\rm
We may replace the canonical bundle $K_X$ in Theorem~\ref{MainThm} with any bundle $T$ such that $T\otimes K_X^{-1}$ is non-negative. This follows from the proof in Section~\ref{MainSec}.
}
\end{remark}

By the theorem of Brian\c con-Skoda,
\[
|\phi|\leq C |g|^{\mu+r+n-2}
\]
implies that $\phi\in\J(g)^{\mu+r-1}$, and this certainly implies that $|\phi|\leq C^{\prime}|g|^{\mu+r-1}$.
Since $\mu$ is not specified in general we might as well use such an estimate instead of the membership condition.
We choose to use the inequality in this paper for purely technical reasons.
Also, we actually get a special case of the theorem of Brian\c{c}on-Skoda from Theorem \ref{MainThm} with this setting.

If we assume that $M=L$ and $r=1$ we get the following result.
\begin{corollary}\label{coro}
Assume that $f^1,\ldots, f^m$ and $L$ are as in Theorem~\ref{MainThm}.
Then there exist constants $\mu$ and $s_0$ such that for every set of global holomorphic sections $g^1,\ldots,g^{\ell}$ of $L$ the following holds: If $\phi$ is a global section of $K_X\otimes L^{\otimes s_0}$, that satisfies $\phi\in \J(f)$ and $|\phi|\leq C|g|^{\mu}$, then
\begin{equation}\label{thm1form}
\phi=\sum_{ij}\alpha_{ij} g^i f^j,
\end{equation}
where $\alpha_{ij}$ are global sections of $K_X \otimes L^{\otimes (s_0-2)}$.
\end{corollary}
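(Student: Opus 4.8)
The plan is simply to specialize Theorem~\ref{MainThm}. Given $f^1,\ldots,f^m$ and the ample bundle $L$, let $\mu$ and $s_0'$ be the constants furnished by Theorem~\ref{MainThm}, and set $s_0 := s_0' + n + 1$. I claim these $\mu$ and $s_0$ work in the corollary. Indeed, take any global holomorphic sections $g^1,\ldots,g^\ell$ of $L$; since $L$ is ample, we may apply the theorem with $M := L$, $r := 1$, and $s := n+1$ (the smallest value allowed by $s \ge n+r$). With these choices
\begin{equation*}
M^{\otimes s}\otimes K_X \otimes L^{\otimes s_0'} \;=\; L^{\otimes(n+1)}\otimes K_X \otimes L^{\otimes s_0'} \;=\; K_X \otimes L^{\otimes s_0},
\end{equation*}
so a global section $\phi$ of $K_X\otimes L^{\otimes s_0}$ with $\phi\in\J(f)$ and $|\phi|\le C|g|^\mu$ satisfies exactly the hypotheses of Theorem~\ref{MainThm}, since $\mu + r - 1 = \mu$ when $r=1$.

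Next I would unwind the conclusion of the theorem in this case. The multi-index condition $I_1+\ldots+I_\ell = r = 1$ forces exactly one $I_k$ to equal $1$ and all the others to vanish, so the monomial $(g^1)^{I_1}\cdots(g^\ell)^{I_\ell}$ collapses to $g^k$; relabelling $\alpha_{I,j}$ as $\alpha_{kj}$, the representation of $\phi$ given by the theorem becomes $\phi=\sum_{ij}\alpha_{ij}g^if^j$, which is precisely \eqref{thm1form}. Moreover the theorem asserts that $\alpha_{I,j}$ is a global section of
\begin{equation*}
M^{\otimes(s-r)}\otimes K_X \otimes L^{\otimes(s_0'-1)} \;=\; L^{\otimes n}\otimes K_X\otimes L^{\otimes(s_0'-1)} \;=\; K_X\otimes L^{\otimes(s_0-2)},
\end{equation*}
which is exactly the bundle claimed in the corollary.

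Since everything here is an immediate translation of twists and multi-indices, there is no genuine obstacle; the only point to keep in mind is that the line bundle over which the $g^j$ are sections must be ample in order to invoke Theorem~\ref{MainThm}, and this is automatic because that bundle is $L$ itself. If one additionally wishes $s_0-2\ge 0$, so that the target bundle $K_X\otimes L^{\otimes(s_0-2)}$ of the $\alpha_{ij}$ is manifestly effective, one simply enlarges $s_0'$ (equivalently $s_0$) at the outset, which does not affect any of the hypotheses.
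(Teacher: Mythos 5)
Your proposal is correct and follows precisely the route the paper intends: the paper dispatches the corollary with the single remark ``If we assume that $M=L$ and $r=1$ we get the following result,'' and your write-up is exactly the careful unwinding of that specialization, fixing $s=n+1$ so the corollary's $s_0$ becomes $s_0'+n+1$ and verifying that both the source bundle $K_X\otimes L^{\otimes s_0}$ and the target bundle $K_X\otimes L^{\otimes(s_0-2)}$ come out as claimed. No discrepancy with the paper's argument.
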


\begin{remark}\label{BST}
{\rm
If $\J(f)=\J(1)$, then it follows from the proof in Section~\ref{MainSec} that we may take $\mu$ in Corollary \ref{coro} as $\min(n,\ell)$ and we get back a theorem of Brian\c con-Skoda type, cf. part (ii) of Corollary 2.2 in \cite{EL} and  Theorem~7.1, and its proof, in \cite{AW2}.
That is, assume that
$X$ and $L$ are as in Theorem~\ref{MainThm} and $g^1,\ldots,g^{\ell}$ are global holomorphic sections of $L$. Then if $\phi$ is a global section of 
\[
K_X\otimes L^{\otimes s},\quad s\geq n+1,
\]
such that $|\phi|\leq C|g|^{\min(n,\ell)}$, we may write
\[
\phi=\sum_{j}\alpha_{j} g^j,
\]
where $\alpha_{j}$ are global sections of $K_X \otimes L^{\otimes (s-1)}$.
}
\end{remark}

Based on Theorem~\ref{MainThm} and a geometric inequality in \cite{EL}
we prove a theorem about polynomials, which can be regarded as an effective uniform Artin-Rees lemma for the polynomial ring. 
\begin{theorem}\label{poly}
Let $V\subset\C^N$ be an algebraic variety of dimension $n$ and assume that $X$, the closure of $V$ in $\mathbb{P}^N$, is smooth. 
Given polynomials $F_1,\ldots,F_m$ on $V$ there exists a constant $\mu$ such that the following holds:
Assume that $G_1,\ldots, G_{\ell}$ are polynomials on $V$ of degree at most $d$, $r$ is a positive integer, and $\Phi$ is a polynomial such that
\begin{equation}\label{olikhet}
|\Phi|\leq C|G|^{\mu+r-1}
\end{equation}
and
\[
\Phi\in J(F_1,\ldots,F_m).
\]
Then there exist polynomials $P_{I,j}$ such that
\begin{equation*}
\Phi = \sum_{\substack{j=1,\ldots,m \\ I_1+\cdots +I_{\ell}=r}} P_{I,j} G_1^{I_1} \ldots G_{\ell}^{I_{\ell}} F_j,
\end{equation*}
and
\begin{align}
&\deg(P_{I,j} G_1^{I_1} \ldots G_{\ell}^{I_{\ell}} F_j)\leq \nonumber\\ 
&\max\left( (\mu+r-1) d^{c^G_{\infty}} \deg X + \deg \Phi, (n+r)d+ \kappa_1, \deg \Phi + \kappa_2 \right)\label{max},
\end{align}
where the constants $\kappa_1$ and $\kappa_2$ only depend on $J(F)$ and $V$.
\end{theorem}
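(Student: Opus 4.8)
The strategy is to reduce the affine statement about polynomials on $V$ to the projective statement of Theorem 1.2 by homogenizing and multiplying by a suitable power of the hyperplane section. First I would fix the very ample line bundle $L=\holo_{\mathbb P^N}(1)|_X$ on $X$ and let $\mu$ and $s_0$ be the constants produced by Theorem 1.2 applied to $X$, $L$, and the sections $f^j$ obtained by homogenizing $F_1,\ldots,F_m$ to sections of $L^{\otimes \deg F_j}$ (after clearing to a common degree if necessary, so that all the $f^j$ live in a fixed $L^{\otimes e}$). Then, given the $G_i$ of degree $\le d$, homogenize them to sections $g^i$ of $M=L^{\otimes d}$, which is ample, and homogenize $\Phi$ to a section $\phi_0$ of $L^{\otimes \deg\Phi}$. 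The inequality $|\Phi|\le C|G|^{\mu+r-1}$ passes to a corresponding estimate $|\phi_0|\le C'|g|^{\mu+r-1}$ on $X$ after one is careful about the weighting of the homogenizations (this is where the exponent $d^{c^G_\infty}$ and the geometric inequality from \cite{EL} enter — see below), and $\Phi\in J(F)$ gives $\phi_0\in\J(f)$ generically, hence everywhere on the reduced $X$.

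The point is that $\phi_0$ need not be a section of the precise bundle $M^{\otimes s}\otimes K_X\otimes L^{\otimes s_0}$ required by Theorem 1.2. So the second step is to multiply $\phi_0$ by an appropriate monomial in a fixed basis of sections of $L$ — equivalently, multiply the polynomial $\Phi$ by a power of a linear form — to land in $M^{\otimes s}\otimes K_X\otimes L^{\otimes s_0}$ with $s=n+r$. Here one uses that $K_X$ embeds into some power of $L$ (since $L$ is ample, $K_X\otimes L^{\otimes k}$ is globally generated for $k$ large), so that a section of a sufficiently high power of $L$ can be regarded as a section of $M^{\otimes(n+r)}\otimes K_X\otimes L^{\otimes s_0}$ after tensoring; the bookkeeping fixes the constants $\kappa_1,\kappa_2$, which depend only on $X=\overline V$ (through the power of $L$ needed to dominate $K_X$ and to absorb $M^{\otimes(n+r)}$) and on $J(F)$ (through the common degree $e$ of the $f^j$). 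Apply Theorem 1.2 to this modified section to get a decomposition $\phi=\sum \alpha_{I,j}(g^1)^{I_1}\cdots(g^\ell)^{I_\ell}f^j$ with $\alpha_{I,j}$ global sections of $M^{\otimes(s-r)}\otimes K_X\otimes L^{\otimes(s_0-1)}$, and then dehomogenize: restrict everything to the affine chart containing $V$, divide out the extra linear-form factor, and read off polynomials $P_{I,j}$ on $V$. The degree of each term $P_{I,j}G_1^{I_1}\cdots G_\ell^{I_\ell}F_j$ is then at most the degree of the bundle $M^{\otimes s}\otimes K_X\otimes L^{\otimes s_0}$ in which the modified $\phi$ sits, measured in units of $\deg X$, which unwinds to the maximum in \eqref{max}.

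The estimate controlling the exponent is the main subtlety, so I would isolate it. The naive homogenization of $G_i$ of degree $\le d$ to a section of $L^{\otimes d}$ does \emph{not} in general satisfy $|g^i|\lesssim |G_i|$ uniformly on $X$; one needs a comparison between the sup-norm of the homogenized section near the hyperplane at infinity and the Euclidean size of $G_i$ on $V$. This is exactly the content of the geometric inequality of Ein–Lazarsfeld used in \cite{EL} to get effective Briançon–Skoda bounds: on $X$ one has $|g|\gtrsim \delta^{c}|G\circ\text{(affine coords)}|$ type estimates where $\delta$ is the distance to infinity and $c=c^G_\infty$ is a Łojasiewicz-type exponent attached to the common zero locus of the $G_i$ at infinity. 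Raising to the power $\mu+r-1$ and absorbing the $\delta^{c(\mu+r-1)}$ factor into the auxiliary power of the linear form defining infinity is what produces the term $(\mu+r-1)d^{c^G_\infty}\deg X$ in \eqref{max}. The hard part will be making this absorption precise while keeping the hypothesis $\phi\in\J(f)$ and the growth bound simultaneously valid in the correct bundle, and checking that the resulting constants $\kappa_1,\kappa_2$ genuinely depend only on $J(F)$ and $V$ and not on the $G_i$, $r$, or $d$; once that is done, Theorem 1.2 does all the real work and the degree count is a routine tally of Chern numbers.
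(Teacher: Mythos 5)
Your overall reduction — homogenize, apply Theorem~\ref{MainThm} via Remark~\ref{rmk}, use the Ein--Lazarsfeld geometric inequality to control the Łojasiewicz-type obstruction at infinity, dehomogenize — is the right architecture and matches the paper's. But there is a genuine gap in the middle.

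The claim that $\Phi\in J(F)$ on $V$ ``gives $\phi_0\in\J(f)$ generically, hence everywhere on the reduced $X$'' is false. Membership in a coherent ideal sheaf is a local condition that does \emph{not} propagate from a dense open set to its closure: for instance on $\mathbb{P}^1$, if $F=z$ is homogenized to $f=z_0z_1$ (a section of $\holo(2)$) and $\Phi=z$ is homogenized to $\phi=z_1$, then $\phi\in\J(f)$ on the affine chart but $\phi\notin\J(f)$ at $[0:1]$. The homogenization of $\Phi$ may fail to lie in $\J(f)$ along the hyperplane at infinity, and one must multiply by a sufficiently high power $z_0^{\rho-\deg\Phi}$ of the linear form at infinity to repair this. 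The paper makes this quantitative using the decomposition $R^f = 1_V R^f + 1_{X\setminus V}R^f$ of the residue current: $\Phi\in J(F)$ on $V$ kills $1_V R^f$, while $1_{X\setminus V}R^f$ is supported at infinity and is annihilated by $z_0^{\nu}$ for $\nu\geq\nu_f$. That $\nu_f$ is precisely one of the two ingredients of $\kappa_2=\nu_f+\eta$ in the degree bound \eqref{max}. In your sketch you do introduce a power of a linear form, but only to land in the right bundle $M^{\otimes s}\otimes K_X\otimes L^{\otimes s_0}$; you never invoke it to fix the ideal-membership hypothesis at infinity, and so your bookkeeping cannot produce the $\kappa_2$ term correctly, nor is the application of Theorem~\ref{MainThm} actually justified without it.

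Two smaller remarks. First, Theorem~\ref{MainThm} requires the $f^j$ to be sections of $L$ itself, not of $L^{\otimes e}$; so the natural choice is $L=\holo(d')|_X$ (or, as in the paper, $L=\holo(d'+k_X)|_X$ to absorb $K_X$ via Remark~\ref{rmk}), rather than $L=\holo(1)|_X$. Second, your passage from $|\Phi|\leq C|G|^{\mu+r-1}$ on $V$ to $|\phi|\leq C'|g|^{\mu+r-1}$ on $X$ is described in a Łojasiewicz-at-infinity spirit; the paper's precise mechanism is to pass to the normalization of the blow-up of $X$ along $\J(g)$, compare vanishing orders of $\pi^*\phi$ and $\pi^*g$ on the exceptional components $W_j$, and bound the multiplicities $r_j$ of those $W_j$ lying over infinity by $d^{c^G_\infty}\deg X$ using \eqref{GeoOlikhet}. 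This gives the first entry of \eqref{max} cleanly, with the requisite vanishing on the $W_j$ at infinity again supplied by the factor $z_0^{\rho-\deg\Phi}$ — another reason that factor must be chosen large enough.
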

Here $J(F)$ is the polynomial ideal generated by $F_1,\ldots,F_m$.
The constant $c_{\infty}^G$ is defined in Section \ref{polysec}; it is less than or equal to $n$. From this result we also derive a similar but weaker result in the case when $X$ is singular, see Section~\ref{sing}.

If $X=\mathbb{P}^n$, $\ell=1$, and $G_1=1$, then \eqref{max} becomes $\deg{\Phi}+\kappa$ for some $\kappa$. It is well known that in general $\kappa$ is double exponential in the degree of the $F_j$:s, \cite{Mayr}, and it was proved already in \cite{Hermann} that one can choose $\kappa$ as something like $2(2d^{\prime})^{2^N-1}$, where $d^{\prime}\geq \deg F_j$. This shows that the third entry in \eqref{max} is not only there for technical reasons. The same is true for the other entries as well. Assume for example that $r=1$ and that the zero set of $J(G)$ does not intersect the hyperplane at infinity. In this case $c_{\infty}^G=-\infty$. However, if we let $d$ tend to infinity it must be the case that the degree of $P_{i,j}G_iF_j$ tends to infinity linearly, so the second entry is necessary.
Now, consider the case when $r=1$, $J(F)=J(1)$, and assume that the zero set of $J(G)$ is empty. Then it was proved by Koll\'ar, \cite{Kollar}, Sombra, \cite{Sombra}, and Jelonek, \cite{Jelonek}, that in general the degree of $P_{i,j}G_iF_j$ cannot be chosen less than $d^{\min(\ell,n)}$, so we need something like the first entry.

In special cases one can explicitly calculate the degree estimates and get back classical theorems of Macaulay and Max Noether. This is discussed in the end of  Section \ref{polysec}.\\
\\
{ \bf Acknowledgement: } The author would like to thank Mats Andersson and Elizabeth Wulcan for valuable discussions and comments throughout the writing process of this paper.

\section{Andersson-Wulcan currents and the diamond product}\label{CurrDiam}

In this section we describe a residue current, introduced in \cite{AW}, associated to a generically exact Hermitian complex of vector bundles and also an operation on such complexes introduced in \cite{Sznajdman}.\\

Assume that $E_j$ are Hermitian vector bundles over an $n$-dimensional smooth variety $X$ in $\mathbb{P}^{N}$ and that the complex
\begin{equation}\label{HermCompl}
\ldots \overset{f_2}{\longrightarrow} E_2 \overset{f_2}{\longrightarrow} E_1 \overset{f_1}{\longrightarrow} E_0
\end{equation}
is generically exact, i.e., pointwise exact outside some proper analytic subvariety, $\Z$, of $X$. Let $E=\bigoplus E_k$.
Then there is a natural superstructure, i.e., a $\mathbb{Z}_2$-grading, on $E$ 
, see \cite{AW}. From now on and throughout this paper we assume that $E$ 
is equipped with that superstructure.
Consider the sheaves, $\mathcal{E}^{p,q}(E)$, of smooth $(p,q)$-forms on $X$ with values in $E$ and the space, $\mathcal{D}^{\prime}(E)$, of currents with values in $E$.
The operator
\[
\nabla_E=\sum f_j-\dbar
\] acts on $\mathcal{E}^{p,q}(E)$ and is naturally extended to $\mathcal{D}^{\prime}(E)$ and
the superstructure on $E$ 
makes sure that $\nabla_E^2
=0$, see \cite{AW}.

If $\sigma_k$ is the minimal inverse to $f_k$ on $X\setminus\Z$, i.e,
\[
\sigma_k \xi = 
\begin{cases}
\eta,\text{ where } f_k\eta=\xi \text{ and }\eta \text{ has minimal norm, if } \xi\in \operatorname{Im}f_k,\\
0, \text{ if } \xi \in(\operatorname{Im}f_k)^{\perp},
\end{cases}
\]
then the $\Hom(E_0,E)$-valued form
\[
u:=\sigma_1+\sigma_2\dbar\sigma_1+\sigma_3\dbar\sigma_2\dbar\sigma_1+\ldots
\]
satisfies
\[
\nabla_E u = 1_{E_0},
\]
see \cite {AW}. Note that the component 
\[
u_k := \sigma_k\dbar\sigma_{k-1}\cdots\dbar\sigma_1
\] 
of $u$ that takes values in $\Hom(E_0,E_k)$ has bidegree $(0,k-1)$.
The form $u$ can be extended across $\mathcal{Z}$ to a current $U$ by letting 
\begin{equation}\label{U}
U := \lim_{\epsilon \to 0} \chi (|h|^2/\epsilon^2)u,
\end{equation}
where $h_1,\ldots,h_M$ are functions with $\Z$ as their common zero set.
Here $\chi(t)$ is a smooth function on the reals that is $0$ for $t<1$ and $1$ for $t>2$. 
The existence of the limit \eqref{U} is nontrivial and requires the desingularization theorem of Hironaka. 

We now define the residue current
\begin{equation}\label{R}
R: = 1_{E_0} - \nabla_E U.
\end{equation}
\noindent
It obviously has support on $\Z$. The current $R$ is also a so-called pseudomeromorphic current as defined in \cite{AWdecomp}. We may restrict such currents to subvarieties in the following way. If $T$ is a pseudomeromorphic current on $X$ and $V$ is a subvariety of $X$ then the restriction of $T$ to the complement of $V$ has a natural extension to $X$, denoted $1_{V^c}T$. The difference between the current $T$ and that extension is a current with support on $V$ denoted $1_VT$. That is,
\begin{equation}\label{decomp}
T = 1_{V}T + 1_{V^{c}}T.
\end{equation}
For details, see \cite{AWdecomp}.

The sheaf complex
\begin{equation}\label{SheafCompl}
\ldots \overset{f_2}{\longrightarrow} \holo(E_2) \overset{f_2}{\longrightarrow} \holo(E_1) \overset{f_1}{\longrightarrow} \holo(E_0),
\end{equation}
 associated to the complex \eqref{HermCompl}, plays a key role in the following basic result, \cite{AW}.

\begin{theorem}\label{dualitet}
Assume that $X$ is smooth and that $E_0$ in the complex \eqref{HermCompl} has rank one.
Let $\mathcal{J}$ be the ideal sheaf $\operatorname{Im}(f_1)$ of the associated sheaf complex.
If $\phi$ is a holomorphic section of $E_0$, then $\phi\in \mathcal{J}$ if $R\phi = 0$, and the converse is true if the associated sheaf complex is exact.
\end{theorem}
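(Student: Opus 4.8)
The plan is to reduce everything to one formal identity and then do two diagram chases. Both assertions are local (membership of $\phi$ in the sheaf $\J$ is a local matter), so I fix a point of $X$, work in a small coordinate neighbourhood $\omega$, and assume the complex \eqref{HermCompl} has finite length $N$. If $\phi$ is a holomorphic section of $E_0$, then, since $E_0$ sits at the end of the complex, $\nabla_E\phi=-\dbar\phi=0$, and therefore, using $\nabla_E^2=0$ together with the definition \eqref{R} of $R$,
\[
\nabla_E(U\phi)=(\nabla_E U)\phi=(1_{E_0}-R)\phi=\phi-R\phi .
\]
Writing $U\phi=\sum_{k\ge 1}w_k$ with $w_k$ of bidegree $(0,k-1)$ and values in $E_k$, and $R=\sum_{k\ge 0}R_k$ with $R_k$ of bidegree $(0,k)$ and values in $E_k$, and comparing components, this identity unwinds into the chain
\[
f_1w_1=\phi-R_0\phi,\qquad \dbar w_k=f_{k+1}w_{k+1}+R_k\phi\quad(1\le k\le N),
\]
with $w_{N+1}=0$ and $f_{N+1}=0$.

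For the implication $R\phi=0\Rightarrow\phi\in\J$ I would use only that \eqref{HermCompl} is a complex and that $X$ is smooth. When $R\phi=0$ the chain reads $f_1w_1=\phi$, $\dbar w_k=f_{k+1}w_{k+1}$ and $\dbar w_N=0$. Since $\omega$ is smooth, the Dolbeault-Grothendieck lemma for currents applies there: solve $\dbar v_N=w_N$ and set $w_{N-1}'=w_{N-1}-f_Nv_N$; then $\dbar w_{N-1}'=0$, and the chain is unaffected below level $N-1$ because $f_{N-1}f_N=0$. Working down to level $1$ in this way replaces $w_1$ by a $\dbar$-closed, hence holomorphic, section $\tilde w_1$ of $E_1$ with $f_1\tilde w_1=\phi$, so $\phi\in\operatorname{Im}f_1=\J$.

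For the converse, assume in addition that the sheaf complex \eqref{SheafCompl} is exact, and let $\phi\in\J$. The idea is to play \eqref{SheafCompl} against the Dolbeault resolution. In the double complex of currents on $\omega$ with rows given by $f$ and columns by $\dbar$, taking $\dbar$-cohomology first collapses each column (again by smoothness of $\omega$, the Dolbeault complex of currents resolves $\holo(E_k)$) and leaves \eqref{SheafCompl}; its exactness in positive degrees then forces the associated total complex, with differential $\nabla_E$, to be acyclic except in one degree, where it computes $\holo(E_0)/\J$. Hence $\phi\in\J$ is $\nabla_E$-exact on $\omega$: $\phi=\nabla_E v$ for some current $v=\sum_{k\ge 1}v_k$ of the same bidegrees as $U\phi$. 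Subtracting from $\nabla_E(U\phi)=\phi-R\phi$ gives $R\phi=\nabla_E(v-U\phi)$, so $R\phi$ is $\nabla_E$-exact; it is also $\nabla_E$-closed, since $\nabla_E R=\nabla_E1_{E_0}-\nabla_E^2U=0$.

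It remains to improve ``$R\phi$ is $\nabla_E$-exact'' to ``$R\phi=0$'', and this is the step I expect to be the main obstacle: a $\nabla_E$-exact current with support on $\Z$ need not vanish, so one cannot conclude by homological algebra alone. Here one has to invoke the finer structure of the Andersson-Wulcan current (see \cite{AW,AWdecomp}): that $R$ is pseudomeromorphic and supported on $\Z$, so that it admits decompositions of the form \eqref{decomp} along subvarieties, and that, when \eqref{SheafCompl} is a resolution, $R$ has no components $R_k$ in bidegrees $k$ below the codimension of the zero set of $\J$. Combining this structure with the equations $\dbar w_k=f_{k+1}w_{k+1}+R_k\phi$, an induction then shows that every $R_k\phi$ vanishes, that is, $R\phi=0$. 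Carrying out this last passage — from the formal identities to the genuine vanishing of the residue terms — is the technical heart of the theorem; the rest is bookkeeping with the calculus set up above.
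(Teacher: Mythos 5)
The paper does not prove Theorem~\ref{dualitet}; it is quoted directly from Andersson--Wulcan \cite{AW} as a ``basic result.'' So there is no in-paper proof to compare against, and I will assess your argument on its own.

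Your treatment of the implication $R\phi=0 \Rightarrow \phi\in\J$ is correct and is in fact the argument in \cite{AW}: the identity $\nabla_E(U\phi)=\phi-R\phi$ unwinds into the chain you write, and one resolves it top-down with the $\dbar$-Grothendieck lemma for currents on a coordinate patch of the smooth $X$, using $f_{k-1}f_k=0$ to keep the lower equations intact, arriving at a holomorphic $\tilde w_1$ with $f_1\tilde w_1=\phi$. This part is complete.

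For the converse you have not given a proof. There are two issues. First, the double-complex/spectral-sequence discussion to show that $\phi\in\J$ is $\nabla_E$-exact is unnecessary and does not, in fact, use the hypothesis of exactness: if $\phi\in\J$ then locally $\phi=f_1\psi$ with $\psi$ a holomorphic section of $E_1$, and then $\nabla_E\psi=f_1\psi-\dbar\psi=\phi$ directly. So up to and including the deduction ``$R\phi$ is $\nabla_E$-exact and $\nabla_E$-closed,'' exactness of \eqref{SheafCompl} has played no role whatsoever, which should be a warning sign: the conclusion $R\phi=0$ is false without exactness. Second, and decisively, the step you flag as ``the technical heart'' is precisely the content of the theorem in this direction, and you leave it undone. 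The actual argument in \cite{AW} is not a diagram chase: it uses the dimension principle for pseudomeromorphic currents together with the Buchsbaum--Eisenbud codimension bounds $\operatorname{codim} Z_k\ge k$ for the degeneracy loci of an exact complex, and proceeds by a delicate induction over the strata where the ranks of the $f_k$ drop, showing first that $R_k$ vanishes for $k<\operatorname{codim} Z$ and then that each remaining $R_k\phi$ vanishes when $\phi=f_1\psi$. Merely knowing that $R$ is pseudomeromorphic, supported on $\Z$, and has no components below codimension does not by itself yield $R_k\phi=0$; one needs the interplay between the singularity sets of the resolution and the local structure of $R$. As it stands, the converse direction is a correctly posed plan with the essential lemma missing, not a proof.
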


Notice that even if the complex \eqref{HermCompl} is infinite the residue only takes values in $\Hom(E_0,E_0\oplus\ldots\oplus E_{\dim(X)+1})$. This follows from the construction of $u$ since the component $u_k$ has bidegree $(0,k-1)$.

We would like to use Theorem~\ref{dualitet} to draw the conclusion that a given section belongs to a certain product ideal. In order to do so we need an appropriate complex like \eqref{HermCompl} such that $\operatorname{Im}(f_1)$ lies in the product ideal in question. 
We use a construction due to \cite{Sznajdman} and we give here the definition and basic properties.

\begin{definition}
Given $r$ Hermitian complexes $E^1_{\bullet},\ldots, E^r_{\bullet}$, with morphisms $f^k_j:E^k_j\to E^k_{j-1}$, the diamond product, denoted $ E^1_{\bullet}\lozenge\ldots\lozenge E^r_{\bullet}$, is the complex $H_{\bullet}$, where
\begin{equation*}
H_0 = E_0^1\otimes\ldots\otimes E_0^r,\quad H_k = \bigoplus_{\substack{\alpha_1+\cdots+\alpha_r\\=k-1}}E_{1+\alpha_1}^1\otimes \cdots\otimes E^r_{1+\alpha_r}, 
\end{equation*}
and where the maps $h_j:H_j\to H_{j-1}$ are defined as 
\begin{equation*}
h_1 = f_1^rf_1^{r-1}\ldots f_1^1,\quad h_k = \sum_{1\leq s\leq r, j\geq 2} f_j^s{}\big|_{H_k}.
\end{equation*}
\end{definition}
Note that it follows directly from the definition that 
\begin{equation}\label{ordning}
E^1_{\bullet}\lozenge E^2_{\bullet}\lozenge E^3_{\bullet}=(E^1_{\bullet}\lozenge E^2_{\bullet})\lozenge E^3_{\bullet}=E^1_{\bullet}\lozenge (E^2_{\bullet}\lozenge E^3_{\bullet}).
\end{equation}

If $r$ is odd, then $ E^1_{\bullet}\lozenge\ldots\lozenge E^r_{\bullet}$ inherits its superstructure from the superstructures of the complexes $E_k$. However, if $r$ is even, then one needs to do a trick by
multiplying with the trivial complex 
\[
0\to E\to E\to0,
\]
for any bundle $E$. For details, see \cite{Sznajdman}.

Let $u^k$ be the $\Hom(E_0^k,E^k)$-valued form associated to the complex $E_{\bullet}^k$. It was shown in \cite{Sznajdman} that the form 
\begin{equation}\label{udelar}
u^H:=u^1\otimes\cdots\otimes u^r
\end{equation}
satisfies the equality
\[
\nabla_H u = 1_{H_0}.
\]
From $u^H$ we define the currents $U^H$ and $R^H$ as in \eqref{U} and \eqref{R}.
One can describe the residue current $R^H$ in terms of the individual building block complexes. Assume that $H_{\bullet}$ is the diamond complex of $M_{\bullet}$ and $L_{\bullet}$ and assume that $U^L,R^L,U^M$ and $R^M$ are the currents associated to $L_{\bullet}$ and $M_{\bullet}$. Assume also that $L_{\bullet}$ is exact outside an analytic set defined by a tuple, $h_1$, of analytic functions and let $h_2$ be a tuple that defines the corresponding set for $M_{\bullet}$. Then
\begin{equation}\label{SplitRes}
R^H=R^M\wedge U^L - U^M\wedge R^L,
\end{equation}
where 
\begin{align*}
R^M\wedge U^L &= \lim_{\epsilon\to 0} \dbar \chi (|h_2|^2/\epsilon^2)\wedge u^M\wedge U^L \\
&= \lim_{\epsilon\to 0}\lim_{\delta\to 0} \dbar  \chi (|h_2|^2/\epsilon^2)\wedge u^M\wedge\chi (|h_1|^2/\delta^2)  u^L,
\end{align*}
and 
\begin{align}
U^M\wedge R^L &= \lim_{\epsilon\to 0} \chi (|h_2|^2/\epsilon^2)u^M\wedge R^L \label{temp}\\
&= \lim_{\epsilon\to 0}\lim_{\delta\to 0} \chi (|h_2|^2/\epsilon^2) u^M\wedge \dbar  \chi (|h_1|^2/\delta^2)\wedge u^L,\nonumber
\end{align}
see Proposition~3.4 in \cite{Sznajdman}.

Products of more than two factors are defined in the same way.
Once again, the existence of the limits is non-trivial.
The order of the limits is important as we see in the one-variable principal value example 
\[U=\frac{1}{z},\quad R=\dbar\frac{1}{z}.\]
In this case we get
\[
U\wedge R = 0,\quad R\wedge U = \dbar\frac{1}{z^2}.
\]

\section{The proof of Theorem~\ref{MainThm}}\label{MainSec}

Our proof of Theorem~\ref{MainThm} is based on the fact that $\phi$ annihilates a residue current $R^H$ associated to the diamond product of appropriate choices of complexes.

Let $X,L,M,f^j$ and $g^j$ be as in Theorem~\ref{MainThm}. Since $L$ is ample there exists an exact sequence like \eqref{SheafCompl}, with a direct sum of negative powers of $L$ as $E_k$, such that $\operatorname{Im}f_1=\J(f)$ , see for example \cite{Lazar1}. Indeed,
consider the sequence 
\[
\oplus\holo(L^{-1})\overset{f}{\longrightarrow}\holo_X\longrightarrow \holo_X/\J(f)\longrightarrow 0,
\]
where $f$ is the mapping $(f^1,\ldots,f^m)$.
Let $F$ be the kernel of the surjection $f$. Then 
$F\otimes\holo(L^{\otimes d_2})$ is generated by its global sections if $d_2$ is big enough by the Cartan-Serre-Grothendieck theorem. Fixing generating sections we get a surjective map $\holo_X\to F\otimes\holo(L^{\otimes d_2})$ and hence we have a surjection $\holo(L^{-\otimes d_2})\to F$. If we repeat this argument for the kernel of that map and so on we get a, possibly non-terminating, exact complex
\begin{equation*}
\ldots\overset{f_3}{\longrightarrow}\oplus\holo(L^{-\otimes d_2})\overset{f_2}{\longrightarrow}\oplus\holo(L^{-1})\overset{f_1=f}{\longrightarrow} \holo_X \longrightarrow \holo_X/J(f) \longrightarrow 0,
\end{equation*}
where $d_2,d_3\ldots$ are positive integers. For a Hermitian vector bundle $S_0$ we get a Hermitian complex
\begin{equation}\label{Fcompl}
\ldots\overset{f_3}{\longrightarrow} S_0\otimes(\oplus L^{-\otimes d_2})\overset{f_2}{\longrightarrow}S_0\otimes (\oplus L^{-1})\overset{f}{\longrightarrow}S_0,
\end{equation}
that is pointwise exact outside the zero set of $\J(f)$.

For $\J(g)$ we choose the Koszul complex, i.e.,
we let $E^j$ be trivial line bundles over $X$ with global frames $e_j$ and set
\[
E = M^{-1}\otimes E^1 \oplus \ldots \oplus M^{-1}\otimes E^l.
\]
Then
the Koszul complex
is the Hermitian complex
\begin{equation}\label{Gcompl}
0\longrightarrow E_n \overset{\delta_n}{\longrightarrow}\ldots\overset{\delta_2}{\longrightarrow}E_1 \overset{\delta_1}{\longrightarrow}E_0,
\end{equation}
where
\begin{equation*}
E_k = 
\Lambda^k E 
= 
M^{-k}\otimes\Lambda^k(E^1\oplus\ldots\oplus E^{\ell}).
\end{equation*}
The maps $\delta_k: E_k\to E_{k-1}$ are interior multiplication with the section $g$ of $E^*$, where $g=\sum g^je_j^*$ and $e^*_j$ is the dual frame. For details, see for example Example 2.1 in \cite{AW2}.

Denote the complex \eqref{Fcompl} by $L_{\bullet}$ and by $M_{\bullet}$ the Koszul complex associated to $\J(g)$. For a Hermitian line bundle $S$ let $R^H$ be the residue current from Section~\ref{CurrDiam} associated to the complex 
\begin{equation}\label{H}
H_{\bullet} := (S\otimes\underbrace{ M_{\bullet} \lozenge M_{\bullet} \lozenge \ldots \lozenge M_{\bullet}}_{r \text{ times}})\lozenge L_{\bullet}.
\end{equation}
Then, according to \eqref{SplitRes} and \eqref{ordning}, we can write 
\[
R^H = R^M \wedge U^L - U^M \wedge R^L,
\]
where $R^L,U^L,R^M$ and $R^M$ are the currents associated to the complexes $L_{\bullet}$ and $S\otimes M_{\bullet}\lozenge \ldots \lozenge M_{\bullet}$.

The following proposition from \cite{AW2} can be seen as a global version of the first part of Theorem~\ref{dualitet}.
\begin{proposition}\label{kohomologi}
Assume that $\eqref{HermCompl}$ is a generically exact Hermitian complex over a smooth variety $X$ and that $\phi$ is a holomorphic section of the bundle $E_0$. If $R$ is the associated residue current, $R\phi=0$, and 
\begin{equation*}
H^{k-1}(X,\holo(E_k))=0, \quad 1\leq k \leq n+1,
\end{equation*}
then there is a global holomorphic section $\psi$ of $E_1$ such that $f^1\psi=\phi$.
\end{proposition}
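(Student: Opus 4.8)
The plan is to read off from $\nabla_E U = 1_{E_0}-R$ a chain of currents solving the successive $\dbar$-equations along the complex, and then to use the cohomology hypotheses to run a downward homotopy that turns the current-level solution of $f^1\psi=\phi$ into a holomorphic one.

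First I would unwind $R = 1_{E_0}-\nabla_E U$ componentwise. Write $U=U_1+U_2+\cdots$, where $U_k$ is the $\Hom(E_0,E_k)$-valued part, of bidegree $(0,k-1)$; recall that $U_k=0$ once $k-1>n$, so $U=U_1+\cdots+U_{n+1}$. Since $\nabla_E=\sum_j f_j-\dbar$, comparing $\Hom(E_0,E_m)$-components gives $R_0=1_{E_0}-f_1U_1$ and $R_m=\dbar U_m-f_{m+1}U_{m+1}$ for $m\ge 1$. Applying these to the holomorphic section $\phi$ and setting $v_m:=U_m\phi$, an $E_m$-valued $(0,m-1)$-current, the hypothesis $R\phi=0$ together with $\dbar\phi=0$ yields
\[
f_1 v_1=\phi,\qquad \dbar v_m=f_{m+1}v_{m+1}\ (1\le m\le n),\qquad \dbar v_{n+1}=0,
\]
the last being forced anyway by bidegree. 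So $v_1$ already solves $f_1v_1=\phi$ at the level of currents, and the task is to correct it to a global holomorphic section.

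The core step is a descent carried out from the top of the complex downward. Since $v_{n+1}$ is a $\dbar$-closed $(0,n)$-current with values in $E_{n+1}$ and $H^n(X,\holo(E_{n+1}))=0$, we may write $v_{n+1}=\dbar w_{n+1}$ for some $(0,n-1)$-current $w_{n+1}$. Assume inductively that $w_{k+1}$ has been produced with $v_{k+1}-f_{k+2}w_{k+2}=\dbar w_{k+1}$ (with the convention $w_{n+2}:=0$), and put $v_k':=v_k-f_{k+1}w_{k+1}$. Then, using $f_{k+1}f_{k+2}=0$,
\[
\dbar v_k'=\dbar v_k-f_{k+1}\dbar w_{k+1}=f_{k+1}v_{k+1}-f_{k+1}\bigl(v_{k+1}-f_{k+2}w_{k+2}\bigr)=f_{k+1}f_{k+2}w_{k+2}=0.
\]
Thus $v_k'$ is a $\dbar$-closed $(0,k-1)$-current with values in $E_k$, and the vanishing $H^{k-1}(X,\holo(E_k))=0$ gives $v_k'=\dbar w_k$ for some $(0,k-2)$-current $w_k$. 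Running $k$ from $n+1$ down to $2$ produces in particular a $(0,0)$-current $w_2$ with values in $E_2$ satisfying $v_2-f_3w_3=\dbar w_2$.

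Finally I would set $\psi:=v_1-f_2w_2$. Then $f_1\psi=f_1v_1-f_1f_2w_2=\phi$ since $f_1f_2=0$, and $\dbar\psi=\dbar v_1-f_2\dbar w_2=f_2v_2-f_2\bigl(v_2-f_3w_3\bigr)=f_2f_3w_3=0$ since $f_2f_3=0$; hence $\psi$ is a $\dbar$-closed $(0,0)$-current with values in the holomorphic bundle $E_1$, so it is a global holomorphic section of $E_1$, and it solves $f^1\psi=\phi$. The step that carries the weight is the induction: one must verify at each stage that $v_k'$ is genuinely $\dbar$-closed — this is where $f_{k+1}f_{k+2}=0$ and the previous correction enter — so that the hypothesis $H^{k-1}(X,\holo(E_k))=0$ applies. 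Implicitly one also uses that $\dbar$-cohomology computed with currents coincides with the sheaf cohomology $H^\bullet(X,\holo(E_k))$, and that a $\dbar$-closed current of bidegree $(0,0)$ with values in a holomorphic bundle is automatically holomorphic.
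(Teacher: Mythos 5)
Your proof is correct and is essentially the standard Andersson--Wulcan descent argument; the paper does not prove this proposition itself but cites \cite{AW2}, where the same chain of $\dbar$-equations coming from $\nabla_E(U\phi)=\phi$ is solved from the top of the complex downward using the cohomology vanishing, exactly as you do. One small observation your argument makes transparent: the hypothesis $H^{k-1}(X,\holo(E_k))=0$ is only used for $2\leq k\leq n+1$ (the $k=1$ case, $H^0(X,\holo(E_1))=0$, would in fact force $\psi=0$ and is clearly not intended), which is consistent with the fact that in the proof of Theorem~\ref{MainThm} Kodaira vanishing only supplies the groups $H^q$ for $q\geq 1$.
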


We are now in a position to prove Theorem~\ref{MainThm}.

\begin{proof}[Proof of Theorem \ref{MainThm}]
Assume that $\phi\in \J(f)$.
Let  $H_{\bullet}$ be the complex \eqref{H} and
choose $S_0$ as $K_X\otimes L^{\otimes s_0}$ and $S$ as $M^{\otimes s}$ in \eqref{Fcompl} and \eqref{H}, respectively.
If we can prove that $R^H\phi=0$, then $\phi$ would be on the form \eqref{thm1form} by Proposition~\ref{kohomologi} if all the relevant cohomology groups vanish.

We are interested in the cohomology groups of the bundles $H_k$ in $H_{\bullet}$ for $1\leq k\leq n+1$. Remember that
$H_k$ consists of a sum of tensor products of one bundle from the complex \eqref{Fcompl} and $r$ bundles from \eqref{Gcompl} tensored by $S$. The possible bundles from $\eqref{Fcompl}$ are 
\[
S_0\otimes L^{-d_{j}},\quad 1\leq j \leq k,
\] 
and the possible bundles from \eqref{Gcompl} are
\[
M^{-j}\otimes\Lambda^{j}(E^1\oplus\ldots\oplus E^{\ell}),\quad 1\leq j \leq k.
\]
Note that the exponent of $M$ in $H_1$ is $s-r$ and that the exponent decreases by at most $1$ at every level in $H_{\bullet}$. In particular,
since a tensor product of ample bundles is ample we can use Kodaira's vanishing theorem to see that the relevant cohomology groups vanish if
\[
s_0\geq\max_{1\leq j \leq n+1}d_j+1=d_{n+1}+1
\]
and
\[
s\geq n+r.
\]

Fix $s_0$ and $s$ so that all the cohomology groups vanish.
It then remains to show that there exists a constant $\mu$ such that $\phi$ annihilates the residue $R^H$, given that $|\phi|\leq C |g|^{\mu + r-1}$.
Remember that $R^H$ splits into the sum 
\begin{equation}\label{uppdelning}
R^M\wedge U^L - U^M\wedge R^L.
\end{equation} 
Since $\phi$ is assumed to belong to $\J(f)$ we get that $R^L\phi =0$ by the second part of Theorem~\ref{dualitet}, and in view of \eqref{temp} $U^M\wedge R^L \phi= 0$.

To see that the first term in \eqref{uppdelning} is annihilated
we use that there exists a modification  $\widetilde{X}\overset{\pi}{\longrightarrow} X$ so that the pull back of $U^L$ locally can be expressed as a finite sum of forms
\[
\pi_*(\frac{smooth}{h}\pi^*\phi),
\]
where $h$ is a section to a line bundle $\widetilde{L}$ over $\widetilde{X}$ such that it locally is a monomial in some local coordinates, see \cite{AW}.
In light of \eqref{udelar} we hence get that locally $R^M\wedge U^L\phi$ is the limit of the pushforward of a finite sum of terms on the form
\begin{equation}\label{InnanPartial}
\pi^*(\dbar \chi(|g|^2/{\epsilon^2})\wedge (u^1\otimes\ldots\otimes u^r))\wedge\frac{smooth}{h}\pi^*\phi,
\end{equation}
where every $u^j$ is associated to $M_{\bullet}$.
Since $X$ is compact the divisor of $h$ is a finite sum $\sum \tau_j D_j$ for positive integers $\tau_j$ and if $\tau=\sum \tau_j$ we get that $h$ locally is a monomial of degree less than or equal to $\tau$ at every point in $X$.
The arguments after expression (4.10) in the proof of Theorem~1.2 in \cite{Sznajdman} now show that $R^M\wedge U^L \phi = 0$, locally at a point $x$, if $\mu\geq \min(\ell,n) + \tau+1$. 
Since $n$ and $\tau$ do not depend on $g$ or $x$ the conclusion of the theorem follows if 
\[
|\phi|\leq C |g|^{\mu+r-1},
\]
where $\mu\geq \min(\ell,n) + \tau+1$.
\end{proof}

\begin{remark}\label{BS}{\rm
Note that if the $f_j$:s do not have any common zeros, i.e., $\J(f)=\J(1)$, then $\tau=0$ and we may choose $\mu$ as $\min(\ell,n)+1$. 
If one carefully reads the proof of Theorem 1.2 in \cite{Sznajdman} one sees that $\mu=\min(\ell,n)$ does the trick in this case. We then get the result in Remark~\ref{BST}.
}
\end{remark}

\section{The proof of Theorem~\ref{poly}}\label{polysec}

Let $X$ be a smooth projective variety of dimension $n$, $L$ a nef line bundle over $X$ and $\J\in\holo_X$ an ideal sheaf. If $Z_j$ are the distinguished subvarieties in the sense of Fulton-MacPherson of $\J$  , see \cite{EL},
and $r_j$ are the coefficients associated to the $Z_j$:s, then
\begin{equation}\label{GeoOlikhet}
\sum r_j \deg_L Z_j \leq \deg_L X,
\end{equation} 
where
\begin{equation*}
\deg Z_j = \int_{Z_j} c_1(L)^{\dim Z_j}
\end{equation*}
is the $L$-degree of $Z_j$.
The geometric inequality \eqref{GeoOlikhet} above is proved in \cite[Proposition~3.1]{EL}.
Note that if $L=\holo(d)$, then 
\begin{equation}\label{degL}
\deg_L X = d^{n}\deg X,
\end{equation}
where $\deg X$ denotes $\deg_{\holo(1)} X$.

If $g_j$ is the $d$-homogenization of $G_j$, then for the ideal sheaf $\J(g)$ we associate a number $c^G_{\infty}$ defined to be the maximal codimension of the distinguished subvarieties $Z_j$ contained in the hyperplane at infinity. 
If there is no distinguished subvariety at infinity we assign to  $c^G_\infty$ the value $-\infty$.
Using \eqref{degL} and \eqref{GeoOlikhet} we get that if $L=\holo(d)$, then
\begin{equation}\label{colik}
r_j \leq d^{c^G_\infty}\deg X
\end{equation}
for $r_j$ associated to $Z_j$ contained in the hyperplane at infinity.

\begin{proof}[Proof of Theorem~\ref{poly}]
Let $V, X, G_j, F_j$ and $\Phi$ be as in Theorem~\ref{poly}.
Let $d^{\prime}$ be the maximum of the degrees of all the polynomials $F_j$ and let $f_j$ and $g_j$ be the $d^{\prime}$ and $d$-homogenization of $F_j$ and  $G_j$, respectively. 
Let
\begin{equation}\label{homoPhi}
\phi = z_0^{\rho-\deg \Phi}\Phi(z_0/z)z_0^{\deg{\Phi}}
\end{equation}
be the $\rho$-homogenization of $\Phi$.
We consider $f_j$ and $g_j$ as sections of $\holo(d^{\prime})$ and $\holo(d)$ restricted to $X$.
The bundle $K_X^{-1}\otimes\holo(k)$ is ample for $k$ large enough, say $k\geq k_X$. By Remark~\ref{rmk} we may therefore use Theorem~\ref{MainThm} on $\phi$ if $\rho$ is big enough, $\phi$ belongs to $\J(f)$ even at the hyperplane at infinity, and the inequality 
\begin{equation}\label{olikbev}
|\phi|\leq C |g|^{\mu+r-1}
\end{equation}
is valid on the whole of $X$.
Let us first show that $\phi$ belongs to $\J(f)$ provided that $\rho$ is larger than some constant depending on $F_1,\ldots,F_m$ and $V$. If $R^f$ is the residue associated to a locally free resolution of $\J(f)$, then by the second part of Theorem~\ref{dualitet} we only need to prove that $R^f$ is annihilated by $\phi$. 
Remember that we may write
\begin{equation}\label{uppdelbev}
R^f = 1_VR^f + 1_{X\setminus V}R^f,
\end{equation}
cf., Section \ref{CurrDiam}.
Since $\phi\in \J(f)$ on $V$ it follows from Theorem \ref{dualitet} that $\phi$ annihilates $1_VR^f$. We know that 
$1_{X\setminus V}R^f$ has support on the hyperplane at infinity so $z_0^\nu$ annihilates $1_{X\setminus V}R^f$ if $\nu$ is large enough, say larger than $\nu_f$.
This means that if $\rho$ in \eqref{homoPhi} is chosen so that 
\begin{equation}\label{ett}
\rho\geq\deg\Phi+\nu_f,
\end{equation}
then $R^f$ is annihilated by $\phi$ and thus $\phi\in\J(f)$.

To make sure that \eqref{olikbev} holds we consider the normalization 
\[
\widetilde{X}\overset{\pi}{\longrightarrow}X,
\]
of the blow-up of $X$
along $\J(g)$.
Let $X_\infty$ be the part of $X$ that intersect the hyperplane at infinity and
write the exceptional divisor as $W=\sum r_j W_j$. 
Then, by definition, the distinguished subvarieties $Z_j$ are the images of $W_j$, and hence
\[
r_j\leq d^{c_\infty^G}\deg X
\] 
if $W_j\subseteq X_\infty$
by \eqref{colik}.
The polynomial $\Phi$ satisfies \eqref{olikhet} by hypothysis so we get that $\pi^*\phi$ vanishes to order $(\mu + r-1) r_j$ on $W_j$ if $\pi W_j \nsubseteq X_{\infty}$. If $\pi W_j\subseteq X_{\infty}$, then $\pi^*\phi$ vanishes to order $\rho - \deg\Phi$ on $W_j$.
If we choose $\rho$ such that
\begin{equation}\label{tva} 
\rho\geq (\mu+r-1)d^{c_\infty^G}\deg X+\deg\Phi,
\end{equation}
we get that $\pi^* \phi$ vanishes to order $(\mu+r-1) r_j$ on all $W_j$. This means that $|\pi^*\phi|\leq C |\pi^* g|^{\mu+r-1}$ on the whole of $\widetilde{X}$ and hence \eqref{olikbev} holds.

If also
\begin{equation}\label{tre}
\rho \geq d(n+r)+ (d^{\prime}+k_X)s_0,
\end{equation}
where $s_0$ is the same as the one in Theorem $\ref{MainThm}$ we may apply that theorem on $\phi$ with 
\[M=\holo(d)\big|_X, \quad L=\holo(d^{\prime}+k_X)\big|_X.\]

To sum up, we may use Theorem~\ref{MainThm} if $\rho$ satisfies the inequalities
\eqref{ett}, \eqref{tva}, and \eqref{tre}.
The only thing left is that we need to make sure that the sections $\alpha_{I,j}$ that we get after applying Theorem \ref{MainThm} have extensions to global sections of $\holo(\rho)$. However, that is true if $\rho$ is larger than an absolute number $\eta$ depending on $X$.
The theorem follows with $\kappa_1=(d^{\prime}+k_X)s_0$ and $\kappa_2=\nu_f +\eta$.
\end{proof}

If $V=\C^n$ and hence $X=\mathbb{P}^n$ so that $\deg X=1$
and moreover $J(F)=J(1)$ and $r=1$, then it follows from the proof of Theorem~\ref{MainThm} and Theorem~\ref{poly} that $\kappa_2=\kappa_1=0$. However, one can actually take $\kappa_1=-n$. To see this we just modify the proof of Theorem~\ref{MainThm} slightly. Instead of taking $S=\holo(sd)$ we could take $S=\holo(s)$. In this case we get that $s$ should be so large so that the cohomology groups $H^j(\mathbb{P}^n,\holo(s-d(n+1)))$ vanishes. From Kodaira's vanishing theorem we see that  $s\geq d(n+1)-n$ does the trick. Together with Remark~\ref{BS} we get the following effective version of the Brian\c con-Skoda theorem.

\begin{theorem}\label{Nullstellen}
For every set of polynomials $G_1,\ldots,G_{\ell}$ on $\mathbb{C}^n$ with degree less than or equal to $d$ the following holds:
If $\Phi$ is a polynomial such that $|\Phi|\leq C|G|^{\min(\ell,n)}$, then there exist polynomials $P_j$ such that
\begin{equation*}
\Phi = P_1 G_1 + \ldots +P_{\ell} G_{\ell},
\end{equation*}
and the degree of $P_jG_j$ is at most 
\[
\max\left( \min(\ell,n)d^{c^G_{\infty}} + \deg\Phi, (n+1)d - n\right).
\]
\end{theorem}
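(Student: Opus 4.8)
The plan is to read off Theorem~\ref{Nullstellen} from Theorem~\ref{poly} by specializing $V=\C^n$ (so that $X=\mathbb{P}^n$, which is smooth, and $\deg X=1$), $m=1$ with $F_1=1$ (so that $J(F)=J(1)$ is the unit ideal), and $r=1$, and then to determine the constants $\mu,\kappa_1,\kappa_2$ appearing in \eqref{max} for these choices. For them the membership hypothesis $\Phi\in J(F)$ is automatic and drops out, while the index condition $I_1+\cdots+I_\ell=1$ forces the decomposition furnished by Theorem~\ref{poly} to be of the form $\Phi=P_1G_1+\cdots+P_\ell G_\ell$; so only the identification of the constants and the simplification of the bound remain.

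First I would observe that $\mu=\min(\ell,n)$ is admissible: the proof of Theorem~\ref{MainThm} allows $\mu=\min(\ell,n)+\tau+1$, and by Remark~\ref{BS}, when $\J(f)=\J(1)$ one has $\tau=0$, and a careful reading of the argument of \cite{Sznajdman} even permits $\mu=\min(\ell,n)$; so \eqref{olikhet} is exactly the hypothesis $|\Phi|\le C|G|^{\min(\ell,n)}$ of Theorem~\ref{Nullstellen}. Next, in the proof of Theorem~\ref{poly} one has $\kappa_2=\nu_f+\eta$, and both summands vanish here: the residue current $R^f$ is supported on the zero set of $\J(1)$, which is empty, so $R^f=0$, hence $1_{X\setminus V}R^f=0$ and $\nu_f=0$; and since $X=\mathbb{P}^n$, every global section of $\holo(\rho)$ on $X$ is literally a degree-$\rho$ form, so $\eta=0$. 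Thus $\kappa_2=0$, and then the third entry $\deg\Phi+\kappa_2=\deg\Phi$ of \eqref{max} does not exceed its first entry $\min(\ell,n)d^{c_\infty^G}+\deg\Phi$ and may be discarded.

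The one substantive point is the sharpening $\kappa_1=-n$, for which I would revisit the cohomological half of the proof of Theorem~\ref{MainThm} with $X=\mathbb{P}^n$, $M=\holo(d)$, $r=1$, $\J(f)=\J(1)$. Using Remark~\ref{rmk} to replace $K_X$ by the trivial bundle and, in \eqref{H}, taking $S=\holo(s)$ in place of $M^{\otimes s}=\holo(sd)$, the section $\phi$ becomes a section of $\holo(s)$, so the homogenization degree is $\rho=s$ and is no longer forced to be a multiple of $d$; meanwhile, since $L_\bullet$ is the trivial resolution of $\J(1)$, each $H_k$ with $2\le k\le n+1$ is a direct sum of line bundles $\holo(s-kd)$ coming from $\Lambda^k(\oplus^\ell\holo(-d))$. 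The relevant groups are therefore $H^{k-1}(\mathbb{P}^n,\holo(s-kd))$, and, writing $\holo(m)=K_{\mathbb{P}^n}\otimes\holo(m+n+1)$, Kodaira's theorem makes all of them vanish as soon as $s-d(n+1)\ge -n$, i.e. $s\ge d(n+1)-n$. The local residue-vanishing part of the proof of Theorem~\ref{MainThm}, which is what fixes $\mu$, is insensitive to this change of global twist, so Proposition~\ref{kohomologi} still applies; carrying $s\ge d(n+1)-n$ through the homogenization argument of the proof of Theorem~\ref{poly} then replaces the lower bound \eqref{tre} on $\rho$ by $\rho\ge d(n+1)-n$, i.e. $\kappa_1=-n$.

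Assembling the three computations, the estimate \eqref{max} collapses to $\max\bigl(\min(\ell,n)d^{c_\infty^G}+\deg\Phi,\,(n+1)d-n\bigr)$, and dehomogenizing the sections $\alpha_j$ produced by Theorem~\ref{MainThm} gives polynomials $P_j$ with $\deg(P_jG_j)\le\rho$, which is exactly the stated bound. I expect the main obstacle to be precisely the $\kappa_1=-n$ step: one must check both that shrinking $S$ from $\holo(sd)$ to $\holo(s)$ still leaves enough positivity in the bundles $H_k$ for Kodaira's vanishing theorem to deliver Proposition~\ref{kohomologi} at the sharp threshold $s\ge d(n+1)-n$, and that nothing in the local argument controlling $\mu$ depends on the magnitude of that global twist; the remainder is routine bookkeeping of constants already isolated in the proofs of Theorems~\ref{MainThm} and~\ref{poly}.
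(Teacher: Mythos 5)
Your proposal is correct and follows essentially the same route as the paper: specialize Theorem~\ref{poly} to $V=\C^n$, $J(F)=J(1)$, $r=1$; invoke Remark~\ref{BS} to take $\mu=\min(\ell,n)$; observe $\kappa_2=0$; and sharpen $\kappa_1$ to $-n$ by replacing $S=\holo(sd)$ with $S=\holo(s)$ in the proof of Theorem~\ref{MainThm} so that Kodaira's vanishing theorem on $\mathbb{P}^n$ kicks in at $s\geq d(n+1)-n$. Your explicit justifications that $\nu_f=0$ (since $R^f=0$ when $\J(f)=\J(1)$) and $\eta=0$ (since sections of $\holo(\rho)$ on $\mathbb{P}^n$ need no extension) merely fill in details the paper leaves implicit.
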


The theorem above was already proved in \cite{AG}. Note that if we also assume that the common zero set is empty we almost get back the optimal degree estimate, $d^{\min(\ell,n)}$, of Koll\'ar and Jelonek, mentioned in Section~\ref{Intro}.
If we also assume that $G_1,\ldots,G_{\ell}$ have no common zeros at infinity we do get back the classical theorem of Macaulay, \cite{Macaulay}.
That is, we may write 
\begin{equation*}
1=\sum P_jG_j,
\end{equation*}
where the degree of $P_jG_j$ is at most $(n+1)d-n$.

If we assume that $\deg G_j = 0$, the common zero set of $F_1,\ldots,F_m$ is a discrete set, $m=n$, and that there are no zeros at the hyperplane at infinity, then we get back the theorem of Max Noether, i.e., we may write \begin{equation*}
\Phi=\sum P_jF_j,
\end{equation*}
where the degree of $P_jF_j$ is at most $\deg\Phi$, \cite{Max}.
To see this we first note that $c_\infty^G=-\infty$ and that $\kappa_2=0$. 
This means that $\deg P_j F_j\leq\max(\deg\Phi,\kappa_1)$. From the proof of Theorem~\ref{poly} we know that $\kappa_1$ is a multiple of $s_0$ from Theorem~\ref{MainThm}. In this case this means that $\kappa_1$ is a number so that $H^{k-1}(\mathbb{P}^n,\holo(\kappa_1-d_kd^{\prime}))=0$, where $d_k$ are the numbers in the proof of Theorem~\ref{MainThm} and $d^{\prime}$ is the maximum degree of the $F_j$:s.
Since $\J(f)$ is a complete intersection we may use the Koszul complex as the exact sequence that defines the residue associated with $\J(f)$. In particular, it has length $n$ which means that we may choose $\kappa_1$ as $0$.

\section{The non-smooth case}\label{sing}

Let $V\subset \mathbb{C}^N$ be a singular reduced algebraic variety of dimension $n$. It was noted by Mats Andersson that one can deduce an Artin-Rees lemma type result on $V$ from the smooth case, i.e., Theorem~\ref{poly}:

\begin{theorem}\label{oglatt}
Let $V$ be as above and let $F_1,\ldots,F_m$ be polynomials on $V$.
Then there exist constants $\mu$ and $\nu$ such that the following holds: Assume that $G_1,\ldots,G_{\ell}$ are polynomials of degree at most $d$ and that $\Phi$ is a polynomial such that 
\begin{equation}\label{oglattolik}
|\Phi|\leq |G|^{\mu+\nu}
\end{equation}
and 
\begin{equation*}
\Phi\in(F_1,\ldots, F_m)
\end{equation*}
on $V$.
Then there exist polynomials $A_{i,j}$ such that
\begin{equation*}
\Phi = \sum A_{i,j}G_iF_j
\end{equation*}
on $V$ and
\[\deg(A_{j,\ell}G_{j}F_{\ell})\leq \deg\Phi + (\nu+\mu) d^n\deg X + \mu d^N + O(d).\]
\end{theorem}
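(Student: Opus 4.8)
The plan is to reduce the statement to the smooth case, Theorem~\ref{poly}, by resolving the singularities of $X=\overline{V}\subseteq\mathbb{P}^{N}$, transferring everything to the smooth model, and then descending the resulting identity back to $V$; the surplus exponent $\nu$ will be essentially what the descent (through a normalization) costs. Throughout, a \emph{constant} means a quantity depending only on $V$ and $F_{1},\dots,F_{m}$, and all such constants get folded into $\mu$ and $\nu$ at the end.

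First I would fix a resolution $\pi\colon\widetilde{X}\to X$ with $\widetilde{X}$ smooth projective of dimension $n$ and $\pi$ an isomorphism over $X_{\mathrm{reg}}$, and realize $\widetilde{X}$ as the projective closure of an affine variety compatibly with $\C^{N}$. For this one takes a very ample bundle on $\widetilde{X}$ of the form $\pi^{*}\holo_{X}(1)\otimes H$ with $H$ very ample --- this is still very ample since $\pi^{*}\holo_{X}(1)$ is globally generated --- and uses a section of it of the shape $(z_{0}\circ\pi)\cdot s_{H}$ to cut out the hyperplane at infinity. The resulting affine chart $\widetilde{V}\subseteq\C^{N'}$ is dense in $\widetilde{X}$ and contained in $\pi^{-1}(V)$, so $\pi^{*}F_{j}$, $\pi^{*}G_{i}$, $\pi^{*}\Phi$ are regular there and give polynomials $\widetilde{F}_{j},\widetilde{G}_{i},\widetilde{\Phi}$ on $\widetilde{V}$. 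The point of this embedding is that $\pi^{-1}$ of the hyperplane at infinity of $\mathbb{P}^{N}$ lies inside the new hyperplane at infinity, which keeps pole orders down: for generic $s_{H}$ one gets $\deg\widetilde{G}_{i}=O(d)$, $\deg\widetilde{F}_{j}$ a constant, $\deg\widetilde{\Phi}\le\deg\Phi$, and $\deg\widetilde{X}=\deg X+\text{const}$.

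Next I would check the hypotheses of Theorem~\ref{poly} for $\widetilde{V},\widetilde{X},\widetilde{F}_{j},\widetilde{G}_{i},\widetilde{\Phi}$ with $r=1$. Membership $\widetilde{\Phi}\in(\widetilde{F}_{1},\dots,\widetilde{F}_{m})$ on $\widetilde{V}$ is immediate from $\Phi\in(F)$ on $V$ by pullback, and $|\widetilde{\Phi}|\le|\widetilde{G}|^{\mu+\nu}$ on $\widetilde{V}$ follows by pulling \eqref{oglattolik} back along the surjection $\widetilde{V}\to V$; one only needs $\mu+\nu$ to be at least the Artin--Rees constant attached to the fixed tuple $\widetilde{F}$ by Theorem~\ref{poly}, which is a constant. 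Theorem~\ref{poly} then produces polynomials $\widetilde{A}_{ij}$ on $\widetilde{V}$ with $\widetilde{\Phi}=\sum\widetilde{A}_{ij}\widetilde{G}_{i}\widetilde{F}_{j}$ and a degree bound of the form \eqref{max}, in which $c^{\widetilde{G}}_{\infty}\le\dim\widetilde{X}=n$ and $\deg\widetilde{X}=\deg X+\text{const}$. Substituting the degree estimates above, the first entry of \eqref{max} becomes $(\mu+\nu)d^{n}\deg X+\deg\Phi$ plus terms that are $O(d^{n})$, the second entry is $O(d)$, and the third is $\deg\Phi+\text{const}$; since $V$ is singular we have $n<N$, so the $O(d^{n})$ terms are absorbed into a single term $\mu d^{N}$, and altogether the bound becomes $\deg\Phi+(\nu+\mu)d^{n}\deg X+\mu d^{N}+O(d)$ after renaming constants.

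The last and most delicate step is to descend the identity $\pi^{*}\Phi=\sum\widetilde{A}_{ij}\,\pi^{*}G_{i}\,\pi^{*}F_{j}$ from $\widetilde{V}$ back to $V$ with polynomial coefficients of the claimed degree. Over $V_{\mathrm{reg}}$ this is automatic since $\pi$ is an isomorphism there; the real work is to extend the coefficients across $V_{\mathrm{sing}}$. Because $\widetilde{V}$ is smooth, $\pi$ factors through the normalization $\widehat{V}\to V$ and $\pi_{*}\holo_{\widetilde{V}}=\holo_{\widehat{V}}$, so the relation descends without loss to $\widehat{V}$; the passage from $\widehat{V}$ down to the possibly non-normal reduced variety $V$ costs a conductor factor $\mathfrak{c}$, and recovering $\Phi$ itself (rather than $\mathfrak{c}\,\Phi$) on $V$ from the resulting effective membership is done by combining with the ineffective uniform Artin--Rees lemma for the local rings of $V$ (from \cite{Sznajdman}) --- which, together with a Brian\c{c}on--Skoda factor on $V$, is exactly what the extra exponent $\nu$ pays for. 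I expect this descent, plus keeping the degree bookkeeping coherent through the changes of model $V$, $\widehat{V}$, $\widetilde{X}\hookrightarrow\mathbb{P}^{N'}$, to be the main obstacle; the appeal to Theorem~\ref{poly} itself is then essentially formal.
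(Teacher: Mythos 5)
The approach you take is genuinely different from the paper's, and the descent step at the end has a real gap.

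The paper's proof avoids resolving singularities entirely. It works in two stages that never leave the ambient space $\C^N$. First, it invokes the global Brian\c{c}on--Skoda--Huneke theorem of Andersson--Wulcan (\cite[Theorem~6.4]{AW2}), which supplies a constant $\nu$ depending only on $V$ so that the hypothesis $|\Phi|\le|G|^{\mu+\nu}$ on $V$ gives an effective representation $\Phi=\sum_{|I|=\mu}a_I G^I$ on $V$ with $\deg a_IG^I\le\deg\Phi+(\nu+\mu)d^n\deg X+O(d)$. It then sets $\widehat\Phi=\sum a_IG^I$, which is a \emph{polynomial on $\C^N$} that agrees with $\Phi$ on $V$, satisfies $|\widehat\Phi|\le C|G|^\mu$ on all of $\C^N$, and lies in the ideal $(F_1,\dots,F_m,H_1,\dots,H_t)$, where $J_V=(H_1,\dots,H_t)$. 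Applying Theorem~\ref{poly} with $r=1$ to the tuple $(F,H)$ on the smooth ambient space $\C^N$ (dimension $N$, hence $d^N$) produces $\widehat\Phi=\sum A_{ij}G_iF_j+\sum B_{ij}G_iH_j$ with controlled degrees, and restricting to $V$ kills the $H$-terms. The crucial feature is that Theorem~\ref{poly} is only ever applied over the smooth variety $\mathbb{P}^N$, so there is no descent problem.

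Your proposal instead resolves $X$ and applies Theorem~\ref{poly} on an affine model $\widetilde V$ of $\widetilde X$. Up to that point the plan is coherent (modulo a careful check that one can realize $\widetilde X\hookrightarrow\mathbb{P}^{N'}$ with $\deg\widetilde G_i=O(d)$; this requires some work but is plausible). The gap is in the descent. You get $\pi^*\Phi=\sum\widetilde A_{ij}\,\pi^*(G_iF_j)$ on $\widetilde V$ with $\widetilde A_{ij}$ polynomials on $\widetilde V$, and you assert that because $\pi_*\holo_{\widetilde V}=\holo_{\widehat V}$ ``the relation descends without loss to $\widehat V$.'' That is not what $\pi_*\holo_{\widetilde V}=\holo_{\widehat V}$ gives: it says that a function on $\widehat V$ whose pullback is regular on $\widetilde V$ is regular on $\widehat V$, but it does \emph{not} say that an arbitrary regular function $\widetilde A_{ij}$ on $\widetilde V$ is a pullback from $\widehat V$. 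In general $\widetilde A_{ij}$ can be non-constant along exceptional fibers or, equivalently, $\widetilde A_{ij}\circ\pi^{-1}$ is a rational function on $\widehat V$ with poles of uncontrolled order along the center of the modification. What the relation on $\widetilde V$ actually gives on $\widehat V$ is membership $\Phi\in\pi_*\bigl(\pi^*\mathcal{I}\cdot\holo_{\widetilde V}\bigr)$ for $\mathcal{I}=(G_iF_j)$, which only implies $\Phi\in\overline{\mathcal{I}}$ (the integral closure), not $\Phi\in\mathcal{I}$. To get back into $\mathcal{I}$ you would need a further Brian\c{c}on--Skoda-type argument on $\widehat V$ with degree control --- i.e.\ essentially the very \cite[Theorem~6.4]{AW2} ingredient the paper invokes --- and then a conductor argument for $\widehat V\to V$. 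Your sketch conflates the conductor (which controls $\widehat V\to V$) with the descent from $\widetilde V$ to $\widehat V$, which is a separate and harder issue, and the invocation of the local ineffective Artin--Rees lemma does not by itself supply the needed global degree bounds. So as written the proposal does not close; if repaired, it would end up reusing the same key inputs as the paper's proof, but routed through a more complicated chain of models.
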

The degree estimate in this result is of type $O(d^N)$ and not as expected of type $O(d^n)$. It is probably true that there is an estimate of type $O(d^n)$ but we cannot prove any such result at this time.

\begin{proof}
Let $F_1\ldots,F_m$ be polynomials on $V\subset \mathbb{C}^N$, let $X$ be the closure of $V$ in $\mathbb{P}^N$, and let $H_1,\ldots,H_t$ cut out $V$, i.e., $J_V=(H_1,\ldots,H_t)$. 

First, Theorem~\ref{poly} implies that there exists a constant $\mu$ such that for every set of polynomials $G_1,\ldots,G_{\ell}$ in $\mathbb{C}^N$ and every polynomial $\widehat{\Phi}$ in $\C^N$ we have that
\begin{align}\label{muu}
&|\widehat{\Phi}|\leq |G|^{\mu},\quad\widehat{\Phi}\in(F_1,\ldots,F_m,H_1\ldots,H_t) \\
&\Longrightarrow \widehat{\Phi} = \sum A_{j,\ell}G_jF_{\ell}+\sum B_{j,\ell}G_jH_{\ell}, \nonumber
\end{align}
where $A_{j,\ell},B_{j,\ell}$ are polynomials and
\[ \deg A_{j,\ell}G_jF_{\ell}\leq \deg \widehat{\Phi} + \mu d^N + O(d).\]

Second, there is a Brian\c{c}on-Skoda-Huneke constant $\nu$ on $V$, see \cite[Theorem~6.4]{AW2}, such that if $\Phi$ and $G_1,\ldots,G_{\ell}$ are as in Theorem~\ref{oglatt} and \eqref{oglattolik} holds on $V$, then
\[
\Phi = \sum_{|I|=\mu}a_IG^I
\]
on $V$ with
\[
\deg a_IG^I \leq \deg\Phi + (\nu+\mu) d^n\deg X + O(d).
\]

Consider
\[\widehat{\Phi} = \sum_{|I|=\mu} a_IG^I\]
as a polynomial in $\mathbb{C}^N$.
Then clearly $|\widehat{\Phi}|\leq |G|^{\mu}$
in $\C^N$ and moreover, $\widehat{\Phi}=\Phi$ on $V$ which means that $\widehat{\Phi}\in(F_1,\ldots,F_m,H_1\ldots,H_t)$. Therefore, by Theorem~\ref{poly} as above we get that 
\[
\widehat{\Phi}=\sum A_{i,j}G_iF_{j}+\sum B_{i,j}G_iH_{j},
\]
with \[\deg(A_{i,j}G_iF_{j})\leq \deg{\widehat{\Phi}}+\mu d^N +O(d).\]
This means that 
\[\Phi = \sum A_{ij}G_{i}F_{j}\]
on $V$ with
\[\deg(A_{i,j}G_{i}F_{j})\leq \deg\Phi + (\nu+\mu) d^n\deg X + \mu d^N + O(d).\]
Note that the linear term $O(d)$ is independent of $\Phi$ and the polynomials $G_1,\ldots,G_{\ell}$.
\end{proof}

\end{document}